\newtheorem{theorem}{Theorem}
\newtheorem{proposition}[theorem]{Proposition}
\newtheorem{theodef}[theorem]{Theorem and Definition}
\theoremstyle{definition}
\newtheorem{definition}{Definition}
\newtheorem{example}{Example}
\theoremstyle{remark}
\newtheorem*{remark}{Remark}
\title{G\"odel's Completeness Theorem and Deligne's Theorem}
\author{Benjamin Frot}
\date{}
\begin{document}
\maketitle

These notes were written for a presentation given at the university Paris VII in January 2012. The goal was to explain a proof of a famous theorem by P. Deligne about coherent topoi (coherent topoi have enough points) and to show how this theorem is equivalent to G\"odel's completeness theorem for first order logic. Because it was not possible to cover everything in only three hours, the focus was on Barr's and Deligne's theorems. This explains why the corresponding sections have been given more attention. Section 1 and the Appendix were added in an attempt to make this document self-contained and understandable by a reader with a good knowledge of topos theory.

A coherent topos is a topos which is equivalent to a Grothendieck topos that admits a site $(C,J)$ such that $C$ has finite limits and there exists a base of $J$ with finite covering families. In order to prove
that any coherent topos has enough points, it is enough to show that for any coherent topos $\mathcal{E}$ there is a surjective geometric morphism from a topos with enough points to $\mathcal{E}$. As sheaf topoi over topological spaces have enough points, any surjective geometric morphism from some $Sh(X)$, with $X$ a topological space, to $\mathcal{E}$ is sufficient. These are provided by Barr's theorem. This is the approach that will be taken here.

The rest of the document is organised as follows : Section 1 contains a sequence of definitions and theorems necessary for a good understanding of the statement of G\"odel's completeness theorem. It also recalls a well known result about Stone spaces that will be useful in Section 3. Section 2 and 3 are dedicated to the proofs of Barr's and Deligne's theorems, respectively. Finally, section 4 gives an overview of the correspondence between G\"odel's and Deligne's theorems.

The proofs themselves come mainly from \cite{MM, topost} but the interested reader will also find relevant information in G.C. Wraith's tutorial \cite{wraith} as well as in \cite{costes} (in French) and \cite{Reyes}. 

\section{First Definitions and Theorems}

\subsection{Completeness Theorem}

This section contains a sequence of definitions necessary for a good understanding of G\"odel's completeness theorem. This should hopefully make it easier for the reader to see the analogies
between the "classical" definitions given here and their, less usual, categorical counterparts.

\begin{definition}{First order language}\newline

	A (multisorted) \textit{first order language} $L$ is a language made of:
	\begin{enumerate}
		\item a collection of \textit{sorts} $X,Y,\dots$;
		\item a collection of \textit{relation symbols} $R,S,\dots$;
		\item a collection of \textit{function symbols} $f,g\dots$;
		\item a collection of \textit{constants} $c,d,\dots$.
	\end{enumerate}
\end{definition}

\begin{definition}{Terms} \newline

	Let $L$ be a first-order language as described in the previous definition. 
	Then \textit{terms} are built using the following rules:
	\begin{enumerate}
		\item If $x$ is a variable of sort $X$, then it is a term $t$ of sort $X$;
		\item If $c$ is a constant of sort $X$, then it is a term $t$ of sort $Y$;
		\item If $t_1, t_2, \dots, t_n$ are terms of sorts $X_1,X_2,\dots,X_n$ respectively, and if
			$f: X_1 \times X_2 \times \dots \times X_n \to Y$ is a function symbol, then $f(t_1,t_2,\dots,t_n)$ is
			a term of sort $Y$.
	\end{enumerate}

\end{definition}

\begin{definition}{Atomic Formula} \newline

	Let $L$ be a first-order language. \textit{Atomic formulas} of $L$ are defined as follows:
	\begin{enumerate}
		\item If $R \subseteq X_1 \times X_2 \times \dots \times X_n$ and if $t_1,t_2,\dots,t_n$ are of sorts $X_1, X_2, \dots, X_n$ respectively,
			then $R(t_1,t_2, \dots, t_n)$ is an atomic formula;
		\item If $t$ and $t'$ are both of sort $X$ then $t = t'$ is an atomic formula;
		\item $\top$ and $\bot$ are both atomic formulas.
	\end{enumerate}

\end{definition}

	Atomic formulas can be combined to give more complex formulas. Here, the case of interest is the one of geometric formulas.

\begin{definition}{Geometric Formula} \newline

	A \textit{geometric formula} is a formula built from atomic formulas, $\exists, \vee$ and $\wedge$.

\end{definition}

\begin{definition}{Sequent (Geometric)} \newline

	Let $L$ be a geometric language and $\Gamma, \Delta$ be finite sets of formulas of $L$ with free variables in the finite set $V$.
	A \textit{sequent} is an expression of the form:
	\[ \Gamma \vdash \Delta^V . \]

\end{definition}

\begin{definition}{Geometric Theory} \newline

	A \textit{geometric theory} $T$ is a set of sequents called the \textit{axioms} of $T$. 

\end{definition}

In such theories, theorems are derived using rules of the same kind as the ones of Gentzen's sequent calculus. These theories being
geometric, only the rules containing $\wedge$, $\vee$ and $\exists$ are retained. 
For an explicit description of these rules see \cite{costes}.

Geometric theories (sometimes called coherent theories) are especially well-behaved since they are preserved and reflected by geometric morphisms.
However, usual mathematics make an extensive use of $\neg, \forall$ and $\Rightarrow$ and are expressed in a \textit{classical language} in the
setting of \textit{classical theories}.

\begin{definition}{Classical Language} \newline

	A \textit{Classical language} is a geometric language $L$ to which the connectives $\neg, \forall, \Rightarrow$ are added, as well as
	the corresponding rules of Gentzen's sequent calculus. Note $L^+$ the classical language associated to the geometric language $L$. 

\end{definition}

\begin{definition}{Classical Theory} \newline

	A \textit{classical theory} $T^+$ associated to a geometric theory $T$ is a theory with the same axioms as $T$, only they are seen in $L^+$.

\end{definition}

Given that geometric theories are interesting to work with because of their behavior with respect to geometric morphisms, it is important to know
when a theorem of a classical theory can be proven in its geometric counterpart. Barr's theorem indirectly answers this question, this explains
why we choose to call it ``Barr logic'' here. It will be proved in the next section using topos theoretic tools.

The following comes from \cite{costes}:
\begin{theorem}{``Barr logic``}\newline \label{barr_logic}

	Let $L$ be a geometric language and let $S$ be a sentence of $L$.

	\begin{center}
		\framebox{If $S$ is a theorem of $T^+$ then it is a theorem of $T$. }
	\end{center}

\end{theorem}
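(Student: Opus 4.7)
The plan is to argue semantically, using the classifying topos of $T$ together with Barr's covering theorem. Recall that the geometric theory $T$ has a classifying topos $\mathcal{E}[T]$ equipped with a generic model $M_T$, whose defining universal property includes the fact that a geometric sentence $S$ is provable in $T$ if and only if $M_T$ satisfies $S$ inside $\mathcal{E}[T]$. So the task reduces to showing that, whenever $T^+ \vdash S$, the sentence $S$ is valid in $M_T$.

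First I would invoke Barr's covering theorem (the topos-theoretic statement to be proved in Section 2, in the form advertised in the introduction): there exists a topological space $X$ together with a surjective geometric morphism $p : Sh(X) \to \mathcal{E}[T]$. Pulling $M_T$ back along $p$ yields a $T$-model $p^*M_T$ in $Sh(X)$. Because $Sh(X)$ has enough points, validity of any geometric sentence in $p^*M_T$ can be checked at stalks, and each stalk is an ordinary set-theoretic model of $T$. In the category of sets classical reasoning is sound, so each stalk is automatically a model of the classical theory $T^+$. Consequently, if $T^+ \vdash S$ then $S$ holds at every stalk of $p^*M_T$, and therefore $p^*M_T$ satisfies $S$ in $Sh(X)$.

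The next step is to reflect this validity back to $\mathcal{E}[T]$. The inverse image functor $p^*$ preserves the interpretation of any geometric formula, since it is left exact and preserves arbitrary colimits; and because $p$ is surjective, $p^*$ is conservative. Applied to the geometric sentence $S$, preservation gives that $p^*$ sends the subobject interpreting $S$ in $M_T$ to the subobject interpreting $S$ in $p^*M_T$, which is maximal by the previous paragraph. Conservativity of $p^*$ then forces the original subobject to have been maximal already, so $M_T \models S$ in $\mathcal{E}[T]$ and hence $T \vdash S$ by the classifying property.

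The main obstacle is Barr's covering theorem itself, i.e.\ the production of a surjective geometric morphism onto $\mathcal{E}[T]$ from a topos of sheaves on a topological space; this is the entire content of Section 2 and must be used here as a black box. Two finer technical points also deserve care: justifying that the classifying topos construction applies to $T$ in the needed generality, and that the interpretation of a geometric formula is genuinely preserved by $p^*$ and reflected by conservativity of $p^*$. Both are standard results in categorical logic, but spelling them out is what makes the brief argument above into a rigorous proof.
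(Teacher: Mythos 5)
Your overall strategy---reduce to validity in the generic model $M_T$ of the classifying topos, transfer the question along a surjection supplied by the covering theorem, and use left-exactness plus conservativity of the inverse image functor to reflect validity back---is the right skeleton, and your final preservation/reflection step is handled correctly. The gap is in the middle: Barr's theorem, as proved in Section 2, produces a surjection $Sh(B) \twoheadrightarrow \mathcal{E}[T]$ where $B$ is a \emph{complete Boolean algebra}, not a surjection from sheaves on a topological space. The localic topos $Sh(B)$ need not have enough points: an atomless complete Boolean algebra (e.g.\ the regular open algebra of $\mathbb{R}$) gives a locale with no points whatsoever, so the stalkwise argument you run in $Sh(X)$ cannot be carried out over $B$. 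The morphism relating $Sh(B)$ to the spatial topos $Sh(Stone(B))$ in Proposition \ref{stone} is an \emph{embedding} going the wrong way for your purposes, and upgrading the picture to a surjection from a topos with enough points is precisely where the coherence hypothesis and Deligne's theorem enter. So by positing a spatial cover of the classifying topos you are in effect invoking the completeness theorem of Section 4 to prove Theorem \ref{barr_logic}, rather than deriving Theorem \ref{barr_logic} from Theorem \ref{barr}.

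The repair is to argue internally in $Sh(B)$ instead of pointwise. Because $B$ is a complete Boolean algebra, $Sh(B)$ is a Boolean topos satisfying the internal axiom of choice, so every rule of classical deduction (including choice) is sound for its internal logic. Hence the pullback $p^{\ast} M_T$ of the generic model along the Barr surjection $p : Sh(B) \to \mathcal{E}[T]$ is automatically a model not just of $T$ but of $T^{+}$, and $S$ holds in $p^{\ast} M_T$; conservativity of $p^{\ast}$ then reflects this to $M_T$, and the classifying property gives $T \vdash S$. This is the argument the paper intends (and is why Wraith's formulation of the meta-theorem explicitly mentions the axiom of choice); unlike the stalkwise route, it applies to arbitrary geometric theories and does not presuppose that the classifying topos has any points at all.
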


G.C. Wraith \cite{wraith} expresses the same theorem as the following meta-theorem:
\begin{center}
	\textit{
	If a geometric sentence is deducible from a geometric theory in classical logic, with the axiom of choice, then it is also deducible from it
	intuitionistically.}
\end{center}

Barr's theorem constitutes a first stepping-stone towards the proof of the completeness theorem. 
In what follows, let $T$ be a classical theory expressed in a language $L$.

\begin{remark}
	Are assumed known the notions of \textit{model} and \textit{interpretation}. A very short introduction is given in the appendix (\ref{modeles}).
\end{remark}

\begin{definition}{Satisfied sequent} \newline \label{satisfied}

	Let $\alpha = \Gamma \vdash \Delta^V$ be a sequent. $\alpha$ is satisfied in an interpretation $M$ of $T$ if
	\begin{center}
		\[M(\bigwedge_{\phi \in \Gamma} \phi, V) \leq M(\bigvee_{\psi \in \Delta} \psi, V) \].
	\end{center}

\end{definition}

\begin{theorem}{G\"odel / Deligne} \newline

	Let $\alpha$ be defined as in Definition \ref{satisfied} and let $T$ be a geometric theory.
	\begin{center}
		\framebox{
			$\alpha$ is provable in $T$ if and only if $\alpha$ is satisfied in all models of $T$.
		}
	\end{center}

\end{theorem}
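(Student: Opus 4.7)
The plan is to prove the two directions separately: soundness (provability implies satisfaction in every model) is routine and will be dispatched by induction on proofs, while completeness (satisfaction in every model implies provability) is the deep part and will be obtained by combining the classifying topos of $T$ with Deligne's theorem from Section 3.

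For soundness I would induct on the derivation of $\alpha = \Gamma \vdash \Delta^V$ in $T$. Each rule of the geometric sequent calculus must be shown to preserve the semantic inequality of Definition \ref{satisfied}; since geometric formulas are interpreted using only finite limits, finite joins, and existential images in the subobject lattices of the interpretation, every rule translates directly into a valid inequality between subobjects in any model, and the induction closes mechanically.

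For completeness I would introduce the classifying topos $\mathcal{E}_T$ of $T$, constructed as sheaves on the syntactic site whose objects are geometric formulas-in-context and whose covers are the provable covering families. This topos carries a tautological model $U_T$ with the universal property that geometric morphisms $\mathcal{F} \to \mathcal{E}_T$ correspond to $T$-models in $\mathcal{F}$; in particular its points correspond bijectively to $\mathbf{Set}$-valued models of $T$. The crucial feature of $U_T$ is the generic completeness statement: a geometric sequent $\alpha$ is satisfied in $U_T$ if and only if $\alpha$ is provable in $T$. Since $T$ is geometric with finitary connectives, the syntactic site has finite limits and a base of finite covering families, so $\mathcal{E}_T$ is coherent. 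Deligne's theorem then asserts that $\mathcal{E}_T$ has enough points, i.e.\ the family of inverse-image functors $p^* : \mathcal{E}_T \to \mathbf{Set}$, as $p$ ranges over points, is jointly conservative. Because each $p^*$ preserves the geometric operations used to interpret formulas and sends $U_T$ to the model corresponding to $p$, the inequality of subobjects that witnesses satisfaction of $\alpha$ in $U_T$ holds in $\mathcal{E}_T$ iff it holds after every $p^*$, i.e.\ iff $\alpha$ is satisfied in every $\mathbf{Set}$-model of $T$. Chaining these equivalences, provability of $\alpha$ in $T$ is equivalent to satisfaction in all models, which is the desired completeness.

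The main obstacle is not Deligne's theorem itself, which is granted by Section 3, but the syntax-semantics dictionary for the classifying topos: one must build the syntactic site carefully enough that the associated Yoneda-type embedding yields a bona fide generic model, and verify by induction on the complexity of geometric formulas that satisfaction in $U_T$ coincides with provability in $T$. This is a lengthy but essentially bookkeeping exercise; once it is in place, the logical completeness theorem becomes a one-line corollary of Deligne's enough-points theorem.
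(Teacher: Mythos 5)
Your proposal follows essentially the same route as the paper's Section 4: it identifies the classifying topos $Sh(C(T),J(T))$ with its generic model $U_T$, uses the equivalence between provability and satisfaction in $U_T$ (the paper's step (1), which you likewise defer as a syntactic bookkeeping lemma), notes that the syntactic site makes this topos coherent, and then invokes Deligne's enough-points theorem to pass from satisfaction in all $\mathbf{Set}$-models (via the points $p^\ast U_T$) back to satisfaction in $U_T$. The argument is correct at the same level of detail as the paper's own sketch, with your explicit soundness induction being a minor addition the paper leaves implicit.
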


\subsection{Stone Spaces}

This section is dedicated to the statement of an important result linking the category of sheaves over some
complete boolean algebra to the category of sheaves over the corresponding Stone space. Indeed,
for a complete boolean algebra $B$, there exists a surjective geometric morphism $f : X \to Sh(B)$ which preserves finite
epimorphic families and which is such that $X$ is a \textit{topological space}. This $X$ happens to be $Stone(B)$. As will be shown later, topological spaces have the desirable property of "having enough points", a result that will be useful in the proof of Deligne's theorem.

The theory of Stones spaces is beyond the scope of this document but a detailed enough overview of the topic can be found in \cite{MM} IX.10.

\begin{proposition}\label{stone}

	Let $B$ be a complete boolean algebra. There exists a surjective geometric morphism 
	\[ \phi: \mathcal{O}(Stone(B)) \to B \]
	which induces an embedding 
	\[ i : Sh(B) \to Sh(Stone(B)). \]

	Furthermore, $i_\ast : Sh(B) \to Sh(Stone(B))$ preserves finite epimorphic families.

	(Note: $\mathcal{O}(-)$ is the poset $(\mathcal{O}(X), \subseteq)$ of opens of $X$ seen as a category. More, generally this defines the \textit{frame} associated to some
	locale).
\end{proposition}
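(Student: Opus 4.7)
The strategy is to use Stone duality and define $\phi^\ast: B \to \mathcal{O}(Stone(B))$ by $b \mapsto U_b$, where $U_b = \{\mathfrak{p} \in Stone(B) : b \in \mathfrak{p}\}$ is the basic clopen set of ultrafilters containing $b$. First I would check that this assignment is injective (by the ultrafilter theorem, distinct elements of $B$ are separated by some ultrafilter) and that it preserves $\top$, $\bot$, finite meets, and finite joins (ultrafilters are closed under finite meets and prime with respect to finite joins); thus $\phi^\ast$ is at least a morphism of distributive lattices.

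The crucial input from the \emph{Boolean} structure of $B$, beyond mere distributivity, is the following density lemma: if $b = \bigvee_i b_i$ in $B$, then $\bigcup_i U_{b_i}$ is dense in $U_b$. Indeed, any nonempty basic clopen $U_c$ contained in the alleged gap would yield $c \neq 0$ with $c \leq b$ and $c \wedge b_i = 0$ for every $i$, whence $c = c \wedge \bigvee_i b_i = \bigvee_i (c \wedge b_i) = 0$ by the infinite distributive law valid in a complete Boolean algebra, a contradiction. This density is what upgrades $\phi^\ast$ to a morphism of sites, inducing the desired geometric morphism $\phi$; the injectivity of $\phi^\ast$ translates into faithfulness of the inverse image on sheaves and so gives surjectivity of $\phi$. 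The embedding $i: Sh(B) \to Sh(Stone(B))$ arises dually, extending a $B$-sheaf to a sheaf on $Stone(B)$ along the clopen inclusion, with full faithfulness of $i_\ast$ again using the density lemma.

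The main technical obstacle is the last clause, that $i_\ast$ preserves finite epimorphic families, since direct images do not in general preserve such colimit-type data. The decisive resource here is the compactness of $Stone(B)$: each clopen $U_b$ is closed in the compact Hausdorff space $Stone(B)$, hence itself compact, so every open cover of $U_b$ by basic clopens admits a finite subcover. Given a finite jointly epimorphic family $\{F_k \to G\}_{k=1}^n$ in $Sh(B)$ and a local section of $i_\ast G$ over $U_b$, the joint epimorphism produces a (possibly infinite) cover of $U_b$ by basic clopens on which the section lifts through one of the $F_k$; compactness trims this to a finite cover, and because the family $\{F_k\}$ is itself finite the lifted data reassembles into a joint epimorphism of $\{i_\ast(F_k) \to i_\ast(G)\}_{k=1}^n$ in $Sh(Stone(B))$. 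Without compactness of $Stone(B)$ this passage would fail, which is why the clopen (Boolean) nature of the base is essential.
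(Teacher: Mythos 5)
The paper states this proposition without proof, deferring to \cite{MM}, IX.10, so your sketch has to be judged on its own terms. The first half is essentially the standard Stone-duality picture, but the variance is garbled: $b \mapsto U_b$ preserves finite meets and finite joins but \emph{not} infinite joins --- your own density lemma is precisely the measure of that failure --- so it is not a frame homomorphism and does not directly give a morphism of locales. The $\phi$ of the statement is the surjection of frames going the other way, $U \mapsto \bigvee\{b \in B : U_b \subseteq U\}$ (equivalently $U \mapsto \neg\neg U$, using that $Stone(B)$ is extremally disconnected because $B$ is complete); it exhibits $B$ as the smallest dense sublocale of $Stone(B)$, and the induced $i$ identifies $Sh(B)$ with the $\neg\neg$-sheaves on $Stone(B)$. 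This is recoverable from what you wrote, and the density lemma is indeed the right key fact, so I would count it as a repairable imprecision rather than a gap.

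The last clause is where there is a genuine gap. Your compactness argument begins with ``the joint epimorphism produces a (possibly infinite) cover of $U_b$ by basic clopens on which the section lifts'' and then extracts a finite subcover. But epimorphicity in $Sh(B)$ only yields a family $\{c_j\}$ with $\bigvee_j c_j = b$ \emph{in $B$}, and by your own density lemma $\bigcup_j U_{c_j}$ is merely dense in $U_b$, not equal to it (take an infinite partition of unity in an atomless complete $B$: there are ultrafilters through $b$ avoiding every $c_j$). So there is no open cover of $U_b$ for compactness to act on; if there always were, every cover in $B$ would admit a finite subcover, which is false. The mechanism that actually works is different: for a section $s$ of $G$ at $b$, the set $D^k = \{c \leq b : s|_c \text{ lifts through } F_k\}$ is downward closed and, because sections of sheaves on the complete Boolean algebra $B$ glue over arbitrary pairwise disjoint families (overlaps are $0$), it is closed under arbitrary disjoint joins; a maximal disjoint subfamily then shows $d_k := \bigvee D^k \in D^k$. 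Epimorphicity gives $b = d_1 \vee \dots \vee d_n$, a \emph{finite} join, so every ultrafilter $x \in U_b$ contains some $d_k$ by primality, and the germ of $s$ at $x$ lifts through $i_\ast F_k$ on the genuine neighbourhood $U_{d_k}$. Finiteness of the family enters through primality of ultrafilters with respect to finite joins (plus the disjointification argument, which is where completeness and complementation are used), not through topological compactness of $Stone(B)$.
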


Recall that a finite epimorphic family is a set of arrows $\{f_i : X_i \to Y\}_{i=0}^{n}$ such that if $g \circ f_i = h \circ f_i, \forall i$ then $g = h$.
More specifically, in the context of the previous theorem, finite epimorphic families are of the form 
\[\alpha_k : \{F_k \to F\}_{k=0}^{n} \]
where $F$ and the $F_k$ are sheaves of $Sh(B)$.

\section{Barr's Theorem}

\begin{theorem}{Barr's Theorem}\label{barr}\newline

	If $\mathcal{E}$ is a Grothendieck topos. Then, there exists a complete boolean algebra $B$ and a surjective geometric morphism
	$Sh(B) \to \mathcal{E}$.
\end{theorem}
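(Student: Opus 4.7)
The plan is to construct the cover in two stages: first exhibit a surjective geometric morphism from a localic topos onto $\mathcal{E}$, then cover that localic topos by $Sh(B)$ for a complete Boolean algebra $B$. Since composites of surjective geometric morphisms are surjective, the concatenation of the two yields the theorem.

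Stage one (localic cover): Present $\mathcal{E}$ as $Sh(C,J)$ for a small site, replacing $C$ by its finite-limit completion if necessary. Fix a small generating family $\{G_i\}$ in $\mathcal{E}$ and consider the frame $L$ obtained from the subobjects of $\coprod G_i$, or equivalently the localic reflection of $(C,J)$. By the standard construction, one obtains a geometric morphism $Sh(L) \to \mathcal{E}$ whose inverse image is conservative on the chosen generators and hence surjective. This is essentially a repackaging of the well-known statement that every Grothendieck topos admits a localic cover.

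Stage two (Boolean cover): Given a locale, equivalently a frame $L$, produce a complete Boolean algebra $B$ together with a meet- and join-preserving injection $L \hookrightarrow B$ that is \emph{dense}, in the sense that the corresponding frame map reflects $0$. The construction is forcing-theoretic: using Zorn's lemma, iteratively choose maximal antichains below each nonzero element of $L$ and freely complete under the Boolean operations, yielding a complete Boolean algebra in which $L$ sits densely. The resulting locale map $B \to L$ is a surjection, and therefore induces a surjective geometric morphism $Sh(B) \to Sh(L)$; the crucial point is that density of the frame embedding translates precisely to conservativity of the inverse image on subobjects, which in the localic setting is enough for surjectivity.

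The main obstacle is stage two. The Boolean completion leans essentially on the axiom of choice, consistent with Wraith's nonconstructive phrasing of Theorem~\ref{barr_logic}, and the delicate point is to verify that density, a first-order property of the frame embedding, really does translate into surjectivity of the associated geometric morphism, rather than merely an embedding or conservativity on a restricted class of objects. The localic cover step in stage one is comparatively structural and reduces to site-theoretic manipulations once a small generating family of $\mathcal{E}$ has been fixed.
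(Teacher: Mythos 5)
Your two-stage decomposition (first a localic cover of $\mathcal{E}$, then a Boolean cover of the locale) is exactly the paper's, but the construction you propose for stage two has a genuine gap. The forcing-theoretic completion of a frame $L$ --- maximal antichains plus the regular-open Boolean completion of the underlying poset --- does not produce an injective frame homomorphism $L \to B$. For a frame, the canonical map into the regular-open algebra is $U \mapsto \neg\neg U$, i.e.\ the double-negation nucleus: it is a \emph{surjective} frame homomorphism onto $L_{\neg\neg}$, not an injection, and on the locale side it is a dense \emph{embedding} $X_{\neg\neg} \hookrightarrow X$ rather than a surjection onto $X$ --- the arrow points the wrong way. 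Concretely, in $\mathcal{O}(\mathbb{R})$ the opens $(0,2)$ and $(0,1)\cup(1,2)$ have the same double negation, and the poset $\mathcal{O}(\mathbb{R})\setminus\{0\}$ is not separative, so the forcing completion cannot separate them either. If you instead insist on an order-theoretic dense embedding that is injective, you lose preservation of joins, so you no longer have a frame homomorphism and hence no locale map at all. What is actually needed here is Funayama's theorem, and the paper proves it by gluing many Boolean pieces together: it sets $\mathcal{O}(Y) = \prod_{U \in \mathcal{O}(X)} \mathcal{O}(X-U)_{\neg\neg}$, the product over all closed sublocales of their double-negation Booleanizations; injectivity of $\mathcal{O}(X) \to \mathcal{O}(Y)$ holds because for $U < V$ the component indexed by $U$ sends $U$ to $0$ and $V$ to something nonzero. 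A single Boolean completion of $L$ cannot achieve this separation; you need one Boolean piece per closed complement.

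Stage one is also misidentified, though less fatally. The frame of subobjects of a coproduct of generators gives the \emph{localic reflection}, which is a geometric morphism $\mathcal{E} \to Sh(L)$ going in the wrong direction, not a cover $Sh(L) \to \mathcal{E}$. The localic cover the paper uses is Diaconescu's: the poset $String(C)$ of composable strings of arrows of $C$, equipped with the topology $K$ making the projection $\pi \colon String(C) \to C$ satisfy the covering-lifting property; surjectivity of the induced geometric morphism is then verified by showing $SubSh(E) \to ClSubPr(E \circ \pi)$ is injective. The slogan ``every Grothendieck topos has a localic cover'' is correct, but it is not a repackaging of the localic reflection, and your stage one would need to be replaced by this (or an equivalent) construction.
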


\begin{proof}(\cite{MM})\newline

		$\mathcal{E}$ is a Grothendieck topos and therefore, by definition, it is equivalent to a category $Sh(C, J)$ over some site $(C,J)$.
		Let us fix such a site $(C,J)$ and show the following two points in order:
		\begin{enumerate}
			\item There exists a locale $X$ and a surjective geometric morphism $Sh(X) \twoheadrightarrow Sh(C,J)$;
			\item There exists a surjection of locales $Y \to X$ such that $\mathcal{O}(Y) = B$ is a complete boolean algebra.
		\end{enumerate}

	 \begin{enumerate}
		 \item In order to prove the existence of $X$ and of the surjective geometric morphism $Sh(X) \twoheadrightarrow Sh(C,K)$ we construct them.

			 Define $String(C)$ as follows:
			\begin{itemize}
				\item The objects of $String(C)$ are sequences of arrows of the form 
					$s = (C_n \xrightarrow{\alpha_n} C_{n-1} \to \ldots \xrightarrow{\alpha_1} C_0)$;

				\item Take $s,t \in \mathcal{O}b(String(C))$ with $s$ defined as above. There is an arrow $t \to s$ if and only if
					$t$ is of the form $C_{n + p} \to \dots \to C_n \xrightarrow{\alpha_n} \dots \xrightarrow{\alpha_1} C_0$.
			\end{itemize}

			$\mathcal{O}b(String(C))$ can equivalently be seen as a poset $(\mathcal{O}b(String(C)),\leq)$ in which $t \leq s$ if and only if there is an arrow $t \to s$. Thus, if one had
			a Grothendieck topology $K$ on $String(C)$, $Sh(String(C), K)$ would be a localic topos. 

			To achieve this, let $\pi$ be the functor $String(C) \to C$ which is the assignment of
			\begin{itemize}
				\item $\pi(s) = C_n$ on objects;
				\item $\pi(t \leq s) = \alpha_{n + 1} \circ \dots \circ \alpha_{n + p}$ on arrows.
			\end{itemize}

			Now, if $s$ is an object of $String(C)$ and $U$ is a sieve on $s$, define $U$ to be a covering sieve (in the topology $K$) if and only if
			for any $t$ s.t. $t \leq s$ the set
				\[\{\pi(t' \leq t) | t' \in U\}\] 
				covers $\pi(t)$ according to the topology $J$ defined above.

			Rephrased:
			\begin{center}
				\framebox{$U \in K(s)$ $\Leftrightarrow$ $\forall t \leq s, \{\pi (t' \leq t) | t' \in U\} \in J(t)$.}
			\end{center}

			It is straightforward to check, and we admit, that $K$ is indeed a Grothendieck topology and thus that $Sh(String(C), K)$ is a localic topos.

			It must now be shown that $\pi$ induces a well-defined geometric morphism and that it is surjective:

			\begin{enumerate}
				\item \textit{$\pi$ induces a well-defined geometric morphism $Sh(String(C), K) \to Sh(C, J)$}.\newline

					Start by recalling the following theorem:

					\begin{theorem}

						Let $\pi : D \to C$ be a functor which has the property of \textit{covering lifting} (clp), \textit{i.e.} which is such that:
						\[
						\forall d \in D, \forall R \in J(\pi d), \exists U \in K(d)~ s.t. ~\pi U \subseteq R.
						\]
					then 
					\begin{itemize}
						\item $\pi$ induces a geometric morphism $f: Sh(D, K) \to Sh(C, J)$
						\item $\forall F \in Sh(C, J)$ $f^\ast (F) \cong a (F \circ \pi)$ (where $a$ is the left-adjoint of the inclusion
							$i : Sh(D, K) \to PreSh(D, K)$).
					\end{itemize}
					\end{theorem}

					To check that the hypotheses of the theorem are met, take $s = \{C_n \xrightarrow{\alpha_n} \dots \xrightarrow{\alpha_1} C_0 \}$ and $R$ a $J-$sieve
					covering $\pi s$. 
					Set, $U = \{t' | t' \leq s, \pi(t' \leq s) \in R\}$ 
					(more explicitly: $U = \{(\alpha_{n + m}, \dots, \alpha_n, \dots, \alpha_1) \in String(C) | \alpha_{n + 1} \circ \dots \circ \alpha_{n +m} \in R\}$).
					By definition, $\pi U \subseteq R$. 
					
					It remains to be shown that $U \in K(s)$, or more precisely that for any $t \leq s$, the set 
					$\{ \pi(t' \leq t), t' \in R\}$ covers $\pi(t)$. 
					Fix $t \leq s$ and set $R' = g^\ast R = \{ g | cod(g) = \pi(t), \pi(t \leq s)g \in R \}$. But the stability axiom for $J$ says 
					exactly that $R'$ is a covering sieve for $\pi(t)$ in $C$. 
					Now, notice that $R'$ is contained in $\pi(U \cap \{ t' | t' \leq t \})$ which implies that it covers
					$\pi(t)$. Finally, remember the definition of $K$ to see that $U$ covers $s$ in $String(C)$. 

					Conclusion: $\pi$ induces a geometric morphism $f: Sh(String(C), K) \to Sh(C, J)$.

				\item \textit{$f$ - the geometric morphism induced by $\pi$ - is surjective.}\newline

					A sufficient and necessary condition for a geometric morphism, $f : Sh(D,K) \to Sh(C,J)$ say, to be surjective is:
					\begin{center}
						For any $J-$sheaf $E \in Sh(C, J)$, $f^\ast$ induces an injective geometric morphism $Sub(E) \to Sub(f^\ast E)$.
					\end{center}
				
					Furthermore, we have the following result:
						\begin{theodef}
						Let $P$ be a presheaf over $\mathcal{D}$ and $A \subseteq P$, a subpresheaf of $P$. For any object $D \in \mathcal{D}$, and
						any element $d \in P(D)$, note $d \cdot g := P(g)(d)$. Then,
						\begin{center}
							\framebox{$S_{d, A} = \{ g: D' \to D | d \cdot g \in A(D')\}$ is a sieve over $D$}. 
						\end{center}
						Moreover, the subpresheaf $A$ is closed if and only if $\forall D \in \mathcal{D}, \forall d \in P(D)$,
						$S_{d, A}$ covers $D$ $\Rightarrow$ $d \in A(D)$.
						Finally, if $E \in Sh(C, J)$, then $ClSubPr(E \circ \pi) \cong SubSh(f^\ast E) \cong SubSh(a (E \circ \pi))$.
					\end{theodef}

						Admit just for a second that the triangle
						\begin{center}
							\begin{displaymath}
								\xymatrix{
								&SubSh(E) \ar[ld]_{} \ar[rd]_{f^\ast_E} &\\
								ClSubPr(E \circ \pi) \ar[rr]_{\sim} && SubSh(f^\ast E) }
							\end{displaymath}
						\end{center}
						commutes. Then, since $\pi$ is obviously surjective on objects, $SubSh(E) \to ClSubPr(E \circ \pi), B \mapsto B \circ \pi$ would be injective and, thanks to
					the isomorphism which is at the bottom of the triangle, this part of the proof would be over.

					The commutativity of that triangle requires $SubSh(E) \to ClSubPr(E \circ \pi)$ to be well-defined, \textit{i.e.}
					that for any $B \subseteq E$ the presheaf $B \circ \pi$ is closed in $E \circ \pi$. Commutativity is then given by $a$.

					To prove this, take $s \in String(C)$ and $d \in (E \circ \pi)(s)$ such that $S_{d, B \circ \pi} = \{g: s' \to s | d \cdot g \in B \circ \pi (s') \}$ covers $s$.
					Then, it is obvious that $\pi(S_{d, B \circ \pi})$ covers $\pi(s)$. 
					Moreover, the arrows $\pi g$ are such that $d \cdot g \in B(\pi t)$ and thus are in the sieve $S_{d, B} = \{ h: c' \to \pi(s) | d \cdot h \in B(c')\}$. 
					Consequently, this sieve covers $\pi(s)$ because it contains a sieve covering $\pi(s)$. To conclude, observe that since $B$ is a sub-sheaf of $E$, 
					it is closed in $E$ and $d \in B(\pi s)$.
			\end{enumerate}

		 \item Show that there is a surjection of locales $Y \to X$ such that $\mathcal{O}(Y) = B$ is a complete boolean algebra.
			 \begin{remark}
				 If $Y$ is a locale, we write $\mathcal{O}(Y)$ the corresponding frame.
			 \end{remark}

			 Let $X$ be a locale, then it is possible to define the operations $\Rightarrow$ and $\neg$ in $\mathcal{O}(X)$. For $U$ and $V$ elements of this frame, define:

			 \begin{itemize}
				 \item $U \Rightarrow V = \bigvee\{W \in \mathcal{O}(Y) | W \wedge U \leq V \}$;
				 \item $\neg U = (U \Rightarrow 0)$;
			 \end{itemize}
			 thus turning $\mathcal{O}(Y)$ into a complete Heyting algebra.

			 The set $\mathcal{O}(X_{\neg \neg})$ of fixed points of $\neg \neg : \mathcal{O}(X) \to \mathcal{O}(X)$ is defined by 
			 $\{ U \in \mathcal{O}(X) | \neg \neg U = U \}$. This is a complete boolean algebra and $\neg \neg$ is surjective on $\mathcal{O}(X_{\neg \neg})$.

			\begin{remark}
                The proofs of these assertions are mainly computational and are not of great
                interest here.
			\end{remark}
				
			For any $U \in \mathcal{O}(Y)$, define $(X - U) := \mathcal{O}(X - U) \cong \{ V \in \mathcal{O}(Y) | V \geq U \}$. 
			
				This defines a ``sub-locale'' of $X$. Recast in terms of frames, the embedding $g^{-1} : \mathcal{O}(X) \rightarrowtail \mathcal{O}(X - U)$ is then given by
			 $g^{-1}(W) = W \vee U$.

			 Let \[Y = \coprod_{U \in \mathcal{O}(X)} (X - U)_{\neg \neg}. \] Here, objects are seen as locales and therefore the coproduct (of locales) should be seen as
			 a product of frames. As all the components of this product are complete boolean algebras, $\mathcal{O}(Y)$ is also a complete boolean algebra.

			 Remains the proof of existence of a surjective morphism $Y \to X$, or equivalently, of an injective morphism $\mathcal{O}(Y) \to \mathcal{O}(X)$.
			 For each $U$, set $p_U : (X - U)_{\neg \neg} \to (X - U) \to X$ which defines $p : Y \to X$ on each of the components of the coproduct. Now, assume, that $U$ and 
			 $V \in \mathcal{O}(X)$ are such that $U \neq V$. Without loss of generality, assume further, that $V \geq U$. Then $p_U^{-1}(U) = 0$ while $p_U^{-1}(V) \neq 0$
			 because $V \geq U$. This proves the injectivity of the frame morphism and concludes the proof of Barr's theorem.
	 \end{enumerate}
	\end{proof}

\section{Deligne's theorem}

\subsection{Coherent Topoi}

\begin{definition}{Coherent Topos} \newline \label{toposcoherent}

	A \textit{coherent topos} is a topos which is equivalent to a topos $Sh(C,J)$ such that $C$ has finite limits and $J$ has a base $K$ whose covering families
	are finite.

\end{definition}

\begin{definition}{Point of a topos} \newline

	Let $\mathcal{E}$ be a topos (possibly elementary). A \textit{point} of $\mathcal{E}$ is any geometric morphism $\mathbf{Sets} \to \mathcal{E}$.
\end{definition}

\begin{remark}
	To see that this definition is fairly intuitive, simply look at the case of topological spaces.
	A point of a topological space is a morphism $x : \{\ast\} \to X$ in $Top$. But
	$Sets = Sh(\{ \ast \})$ is the terminal object of the category of Grothendieck topoi.
\end{remark}

\begin{example}{Topological space}\newline

	Let $(X, T)$ and $(Y, T')$ be topological spaces and let $Sh(X), Sh(Y)$ be the sheaf topoi over $\mathcal{O}(X)$ and $\mathcal{O}(Y)$ respectively.
	The set of geometric morphisms $f : Sh(X) \to Sh(Y)$ is in bijection with continuous functions $X \to Y$. If $X = \{\ast \}$ then one sees
	that points of $Y$ are in bijection with points of $Sh(Y)$. Consequently, in the case of topological spaces, a point in the usual sense gives a point 
	in the corresponding sheaf topoi.

	Furthermore, for each sheaf $F$ over $Sh(X)$ and each point $x$ in $X$, define the $stalk$ of $F$ in $x$ by
	 \[F_x = \lim_{\to_{x \in U}} F(U).\]
	Then, for each morphism of sheaves $f: E \to D$ call $f_x$ the morphism $ E_x \to D_x$. Further details can be found in \cite{MM} II.6.
\end{example}

\begin{definition}{To have enough points} \newline \label{assezdepoints}

	Let $\mathcal{E}$ be a topos and $\alpha, \beta : E \to D$ be two distinct parallel arrows. 
	If there exists a point $p$ of $\mathcal{E}$ such that $p^\ast(\alpha) \neq p^\ast(\beta)$ then $\mathcal{E}$ \textit{has enough points}.
\end{definition}

\begin{example}
	Any sheaf topos over a topological space has enough points. 
\end{example}

\begin{proposition}

	Let $\mathcal{E}$ be a topos with enough points, $\mathcal{F}$ a Grothendieck topos and $f : \mathcal{E} \to \mathcal{F}$ a 
	surjective geometric morphism. Then $\mathcal{F}$ has enough points.
\end{proposition}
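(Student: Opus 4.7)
The plan is to transport a point of $\mathcal{E}$ along $f$ to produce a point of $\mathcal{F}$, after first upgrading surjectivity of $f$ into faithfulness of $f^\ast$ on morphisms. Concretely, start with two distinct parallel arrows $\alpha, \beta : A \to B$ in $\mathcal{F}$; the goal is to produce a point $q : \mathbf{Sets} \to \mathcal{F}$ with $q^\ast(\alpha) \neq q^\ast(\beta)$.

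The first step is to show $f^\ast(\alpha) \neq f^\ast(\beta)$ in $\mathcal{E}$. For this I would use the characterization of surjectivity already recalled in the proof of Barr's theorem, namely that $f^\ast$ is injective on subobjects. Form the equalizer $E \hookrightarrow A$ of $\alpha$ and $\beta$; because $\alpha \neq \beta$, this is a proper subobject of $A$. Since $f^\ast$ preserves finite limits, $f^\ast E \hookrightarrow f^\ast A$ is the equalizer of $f^\ast\alpha$ and $f^\ast\beta$, and by the surjectivity hypothesis it remains a proper subobject. Therefore $f^\ast(\alpha) \neq f^\ast(\beta)$.

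Next, apply the hypothesis that $\mathcal{E}$ has enough points (Definition \ref{assezdepoints}) to the pair $f^\ast(\alpha), f^\ast(\beta)$: this yields a point $p : \mathbf{Sets} \to \mathcal{E}$ such that $p^\ast f^\ast(\alpha) \neq p^\ast f^\ast(\beta)$. Now set $q = f \circ p : \mathbf{Sets} \to \mathcal{F}$, which is again a geometric morphism and hence a point of $\mathcal{F}$. Because inverse images compose contravariantly (up to canonical isomorphism), $q^\ast \cong p^\ast \circ f^\ast$, and therefore $q^\ast(\alpha) \neq q^\ast(\beta)$. This exhibits a point of $\mathcal{F}$ separating $\alpha$ and $\beta$, so $\mathcal{F}$ has enough points.

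The only genuinely non-trivial step is the first one, passing from surjectivity of $f$ (a statement about subobjects) to faithfulness of $f^\ast$ on arbitrary parallel pairs; everything afterwards is formal manipulation with compositions of geometric morphisms. No separate hypothesis on $\mathcal{F}$ being a Grothendieck topos is needed in the argument beyond the existence of the equalizer and of the composite geometric morphism $f \circ p$, both of which are standard.
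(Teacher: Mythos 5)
Your proof follows essentially the same route as the paper's: use surjectivity of $f$ to get $f^\ast(\alpha)\neq f^\ast(\beta)$, pick a point $p$ of $\mathcal{E}$ separating them, and take the composite $f\circ p$ as the separating point of $\mathcal{F}$. The only difference is that you actually justify the first step (via the equalizer and injectivity of $f^\ast$ on subobjects) where the paper simply asserts it, which is a welcome addition and is correct.
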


\begin{proof}
	Let $\alpha, \beta : A \to B$ be two parallel morphisms in $\mathcal{F}$ such that $\alpha \neq \beta$. Then, by surjectivity of
	of $f$, we have $f^\ast(\alpha) \neq f^\ast(\beta)$ and there is a $p: Ens \to \mathcal{E}$ such that $p^\ast \circ f^\ast(\alpha) \neq p^\ast \circ f^\ast(\beta)$.
	Geometric morphisms being stable by composition, $f \circ p : Ens \to \mathcal{F}$ is a point of $\mathcal{F}$ and we have
	$(f \circ p)^\ast = p^\ast \circ f^\ast (\alpha) \neq p^\ast \circ f^\ast (\beta) = (f \circ p)^\ast(\beta)$. 
	(To see this, notice that $p$ and $f$ are both geometric, that $p^\ast \circ f^\ast$ preserves finite limits and that 
	$Hom_{Ens}(p^\ast \circ f^\ast(Y), X) \cong Hom_{\mathcal{E}}(f^\ast(Y), p_\ast(X)) \cong Hom_{\mathcal{F}}(Y, f_\ast \circ p_\ast (X))$.) 

\end{proof}
\subsection{Deligne's Theorem}

\begin{theorem}{Deligne's theorem}\newline

	\begin{center}
		\framebox{
		Coherent topoi have enough points.}
	\end{center}
\end{theorem}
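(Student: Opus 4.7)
The plan I would follow is the one previewed in the introduction: given a coherent topos $\mathcal{E}$, produce a surjective geometric morphism from a topos already known to have enough points, and then invoke the preceding proposition (surjections transfer ``enough points'' to the target). The intermediate topos will be $Sh(Stone(B))$ for the complete Boolean algebra $B$ supplied by Barr's theorem, since sheaf topoi on topological spaces have enough points.

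First, apply Theorem~\ref{barr} to obtain a complete Boolean algebra $B$ together with a surjective geometric morphism $p: Sh(B) \twoheadrightarrow \mathcal{E}$. Next, apply Proposition~\ref{stone} to obtain a geometric embedding $i: Sh(B) \hookrightarrow Sh(Stone(B))$ whose direct image $i_*$ preserves finite epimorphic families. For each point $x: \mathbf{Sets} \to Sh(Stone(B))$ I would then consider the composite functor $\pi_x := x^* \circ i_* \circ p^* : \mathcal{E} \to \mathbf{Sets}$. Because $p^*$ and $x^*$ are left-exact (inverse images of geometric morphisms) and $i_*$ is left-exact (a right adjoint), $\pi_x$ preserves finite limits automatically.

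The hard part is to verify that $\pi_x$ is also cocontinuous, so that it underlies a geometric morphism $\mathbf{Sets} \to \mathcal{E}$, i.e., a point of $\mathcal{E}$. The functor $i_*$ is not cocontinuous in general, which is precisely why coherence of $\mathcal{E}$ must enter: choosing a site $(C,J)$ for $\mathcal{E}$ with a base of finite covering families (Definition~\ref{toposcoherent}), the cocontinuity one needs reduces to $\pi_x$ sending finite epimorphic families of $\mathcal{E}$ to jointly surjective families in $\mathbf{Sets}$. This is exactly what the preservation property of $i_*$, combined with the cocontinuity of $p^*$ and $x^*$, is designed to deliver; without coherence the argument collapses, which is consistent with the fact that arbitrary Grothendieck topoi need not have enough points. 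I expect this step---carefully tracking ``coherent'' colimits across all three functors---to be the technical heart of the proof.

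Once each $\pi_x$ is established as a point of $\mathcal{E}$, enough-ness follows formally: given distinct parallel arrows $\alpha,\beta$ in $\mathcal{E}$, surjectivity of $p$ gives $p^*(\alpha)\neq p^*(\beta)$, the embedding property of $i$ (full faithfulness of $i_*$) gives $i_* p^*(\alpha) \neq i_* p^*(\beta)$, and ``enough points'' for $Sh(Stone(B))$ yields an $x$ with $\pi_x(\alpha) = x^* i_* p^*(\alpha) \neq x^* i_* p^*(\beta) = \pi_x(\beta)$. This $\pi_x$ distinguishes $\alpha$ from $\beta$, so $\mathcal{E}$ has enough points, as required.
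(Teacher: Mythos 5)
Your overall strategy is the paper's: combine Barr's theorem with Proposition~\ref{stone} and the transfer-of-points proposition, route everything through $Sh(Stone(B))$, and locate the role of coherence in the interaction between finite covering families and the fact that $i_\ast$ preserves finite epimorphic families. The gap is in the central step. You define $\pi_x = x^\ast \circ i_\ast \circ p^\ast$ as a functor on all of $\mathcal{E}$ and assert that the required cocontinuity ``reduces to'' sending finite epimorphic families to jointly surjective families. That reduction is false for a functor defined on the whole topos: the inverse image of a point must preserve \emph{all} small colimits, and $i_\ast$, being only a right adjoint, destroys infinite coproducts in general. A coherent topos still has arbitrary small colimits, so $x^\ast i_\ast p^\ast$ will in general fail to be cocontinuous and is therefore not literally the inverse image functor of any point of $\mathcal{E}$. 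Preservation of finite covering families is the correct condition only for a functor defined on the coherent \emph{site} $(C,K)$ of Definition~\ref{toposcoherent}, not on $\mathcal{E}$ itself.

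The repair is exactly the paper's argument, and it is not cosmetic: one must pass to the site. Let $A : C \to Sh(B)$ be the continuous left-exact functor corresponding to $p$ under Theorem~\ref{corres}, set $A' = i_\ast \circ A$, and use coherence together with the finite-epimorphic-family property of $i_\ast$ to check that $A'$ is still continuous (this is where finiteness of the covers is genuinely consumed). Theorem~\ref{corres} then produces a geometric morphism $g : Sh(Stone(B)) \to \mathcal{E}$ whose inverse image is the colimit-preserving extension of $A'$; it agrees with $i_\ast p^\ast$ on representables but not on all of $\mathcal{E}$. One then verifies $g \circ i \cong p$ via $g_\ast(F)(c) \cong Hom(A'(c), F)$ and full faithfulness of $i_\ast$, so $g$ is surjective and the transfer proposition applies. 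Your final separation argument then goes through verbatim with $x^\ast g^\ast$ in place of $x^\ast i_\ast p^\ast$, since $i^\ast \circ g^\ast \cong p^\ast$ forces $g^\ast$ to be faithful.
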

\begin{proof}

	Considering the results obtained earlier, showing that for any coherent topos $\mathcal{E}$ there exists a surjective geometric morphism from $\mathcal{E}$ to $Sh(X)$ where $X$ is
	a topological space is enough to prove that coherent topoi have enough points. 
	This is precisely what will be done here with $X = Stone(B)$, $B$ being a complete boolean algebra.

	Take any complete boolean algebra $B$. Then, by Barr's theorem (\ref{barr}), there is a surjective geometric morphism $f : Sh(B) \twoheadrightarrow \mathcal{E}$.
	Now, suppose that, for any choice of such a $B$, we can find a geometric morphism $g : Sh(Stone(B)) \to \mathcal{E}$ such that
					\begin{center}
							\begin{displaymath}
								\xymatrix{
								Sh(B) \ar@{^{(}->}[d]_{i} \ar[r]_{f} &\mathcal{E}\\
								Sh(Stone(B))  \ar @{.>}[ru]_{g}& }
							\end{displaymath}
						\end{center}

	commutes. By commutativity of the triangle, $g$ would be surjective and Deligne's theorem would be proved.

	The rest of the proof focuses on the existence of $g$, starting by recalling a theorem which establishes a correspondence between
	some continuous left-exact functors and geometric morphisms.

	\begin{theorem}\label{corres}

		Let $\mathcal{F}$ be a topos which has small colimits. Let $\mathcal{E} \cong Sh(C,J)$, with $C$ a category with finite limits. Then

		\begin{itemize}
			\item There is a correspondence between continuous left-exact functors $A : C \to \mathcal{F}$ and geometric morphisms $f: \mathcal{F} \to Sh(C, J)$.
			\item For any $c \in C$ and $G \in Sh(C, J)$, we have $f_\ast(G)(c) \cong Hom(A(c), G)$.
		\end{itemize}
	\end{theorem}

	Let $\mathcal{E} = Sh(C,K)$ be a coherent topos, $K$ being the basis for a Grothendieck topology as in Definition \ref{toposcoherent}, and let
	$f: Sh(B) \to \mathcal{E}$ be a geometric morphism.

	According to Theorem \ref{corres}, there exists a continuous left-exact functor $A : C \to Sh(B)$ which corresponds to $f$. Set $A' = i_\ast \circ A$. 
	Note, that because $\mathcal{E}$ is coherent, $K$ is compact (the topology generated by $K$, to be more precise). Moreover, $i$ preserves finite epimorphic families and $A$ is continuous. All this implies the continuity of $A'$.

	\begin{remark}
		It seems that there is confusion on the definition of a \textit{continuous functor}. Here continuous means ``cover-preserving''.
		(see: \url{http://ncatlab.org/nlab/show/continuous+functor}) 
	\end{remark}

	As a composition of two left-exact functors $A'$ is continuous left-exact and Theorem \ref{corres} yields the existence of corresponding 
	geometric morphism $g : Sh(Ston(B)) \to \mathcal{E}$ whose direct image is given by $g_\ast(F)(c) \cong Hom(A'(c), F)$,
	where $F$ is a sheaf of $Sh(Stone(B))$ and $c$ an object of $C$. 

	In particular, take $G$, a sheaf on $B$. Then we have the following equivalences:
	\begin{center}
		$\begin{array}{lclr}
			g_\ast \circ i_\ast G(c) & \cong & Hom(A'(c), i_\ast(G)) & \\
			             &\cong & Hom(i^\ast A'(c), G) &  $(by definition of adjointness)$\\
									 &\cong& Hom(i^\ast i_\ast (A(c)), G) &$(by definition of $A'$)$ \\
									 &\cong& Hom(A(c), G) & $($i_\ast i^\ast \cong id$, $i$ being an embedding.)$\\
		\end{array}$
	\end{center}

	But, as stated above, there is an equivalence between $f_\ast(G)(c)$ and $Hom(A(C), G)$. Thus, $g$ makes the diagram commute up to natural isomorphism.
\end{proof}

\section{Applications to finitary first order logic}

This section dives into the details of the correspondence between Deligne's and G\"odel's theorem, sketching the proof of: ``Deligne's implies G\"odel's''. 
More precisions on this correspondence as well as the one between Barr's theorem and Theorem \ref{barr_logic} can be found in \cite{topost, Reyes}.

\subsection{Syntactic site associated to a geometric theory}

To each geometric theory it is possible to associate a site in a systematic fashion. It turns out that geometric morphisms with codomain the sheaf topos on that
site are in correspondence with the models of the theory.

For what follows, let $T$ be a geometric theory expressed in a language $L$. A category $C(T)$ is associated to $T$ as follows:

\begin{itemize}
	\item The objects of $C(T)$ are equivalence classes $[\phi]$ of formulas of $L$. If $\phi$ and $\psi$ are two formulas with free variables in
		$X = (x_1, \dots, x_n)$ and $X' = (x_1', \dots, x_n')$ respectively and s.t. $X$ and $X'$ have variables of same sorts
		then write $\phi \sim \psi$ if $\{x | \phi(x)\}^M = \{x' | \psi(x') \}^M$ for any model $M$ of $T$. Intuitively,
		$\phi \sim \psi$ if $\psi \vdash \phi$ and $\phi \vdash \psi$.
	\item Arrows of $C(T)$ also are equivalence classes of formulas. A morphism $[\sigma; X, Y] : [\phi, X] \to [\psi, Y]$
		is represented by formulas $\sigma(x_1, \dots, x_n, y_1, \dots, y_m)$ ($X, Y$ being the free variables of $\sigma$) 
		such that:
		\begin{enumerate}
			\item $\sigma(X, Y) \vdash \phi(X) \wedge \psi(Y)$,
			\item $\phi(X) \vdash \exists y_1, \dots, \exists y_m ~ \sigma(X, y_1, \dots, y_m)$,
			\item and \[\sigma(X, Y) \wedge \sigma(X, y_1', \dots, y_n') \vdash \bigwedge_{i = 1}^m (y_i = y_i')\]
		\end{enumerate}
		are theorems of $T$.
	\item The identity $[\phi, X] \to [\phi', X']$ is the arrow $[\phi'(X') \wedge \phi(X) \wedge x_1 = x_1' \wedge x_2 = x_2' \wedge \dots \wedge x_n = x_n']$ 
	\item Composition $[\phi, X] \xrightarrow{\sigma} [\psi, Y] \xrightarrow{\tau} [\theta, Z]$ is given by 
		$[\exists Y (\sigma(X, Y) \wedge \tau(Y, Z))]$.
\end{itemize}

We admit that $C(T)$ is a category and put on $C(T)$ the Grothendieck topology $J(T)$ defined by:
Given $[\sigma_i X_i, Y] : [\phi_i, X_i] \to [\psi, Y]$, the family $\{[\sigma_i, X_i, Y]\}_{i = 1}^n$ covers $[\psi, Y]$ if and only if for any model
$M$ of $T$ 
\begin{center}
	\[\forall Y, \psi(Y) \vdash \bigvee_{i=1}^n \exists X_i \phi_i( X_i, Y)\] 	
\end{center}
is a theorem of $T$.

This topology is sub-canonical, which, in this context, will mainly mean that presheaves of the form $Hom(-,X)$ are sheaves.

$(C(T), J(T))$ is particularly interesting because of the following theorem of Joyal and Reyes.
\begin{theorem}{Joyal-Reyes}\newline

	Let $\mathcal{E}$ be a Grothendieck topos. Recall that $Mod(\mathcal{E},T)$ is the category of $T-$models in $\mathcal{E}$.
	Then
		\[Hom(\mathcal{E}, Sh(C(T), J(T))) \cong Mod(\mathcal{E}, T)\].
\end{theorem}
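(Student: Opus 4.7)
The plan is to reduce the theorem to Theorem \ref{corres} and then identify the two sides of the resulting correspondence syntactically. By Theorem \ref{corres}, applied to the site $(C(T), J(T))$ (whose underlying category has finite limits, constructed out of conjunctions and equalities of formulas), geometric morphisms $\mathcal{E} \to Sh(C(T), J(T))$ are in bijection with continuous left-exact functors $A : C(T) \to \mathcal{E}$. So it suffices to exhibit an equivalence of categories between such functors $A$ and $T$-models in $\mathcal{E}$.

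I would build both directions explicitly. From a model $M \in Mod(\mathcal{E}, T)$, define $A_M : C(T) \to \mathcal{E}$ by $A_M([\phi, X]) = \{X \mid \phi(X)\}^M$, the interpretation of $\phi$ as a subobject of $M(X_1) \times \cdots \times M(X_n)$ in $\mathcal{E}$. On an arrow $[\sigma; X, Y] : [\phi, X] \to [\psi, Y]$, the three clauses in its definition say exactly that $\{(X,Y) \mid \sigma(X,Y)\}^M$ is the graph of a morphism $\{X \mid \phi\}^M \to \{Y \mid \psi\}^M$: the first clause gives containment in the product, the second ensures totality, the third ensures single-valuedness. Conversely, from a continuous left-exact $A$ one recovers a model $M_A$ by setting $M_A(X) = A([x=x, X])$ on sorts, and reading off the interpretations of function and relation symbols from the $A$-image of the corresponding "graph" formulas (this is well defined precisely because $A$ is left exact, so finite conjunctions and equalities of formulas go to the expected limits).

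The core verifications are (i) that $A_M$ preserves finite limits, which is immediate from the syntactic description of products and equalizers in $C(T)$ via conjunction and equality, and (ii) that $A_M$ is $J(T)$-continuous. For (ii), a $J(T)$-covering $\{[\sigma_i, X_i, Y] \to [\psi, Y]\}$ is by definition one for which $\psi(Y) \vdash \bigvee_i \exists X_i\, \sigma_i(X_i, Y)$ is a theorem of $T$; since $M$ satisfies every theorem of $T$, the induced family of subobjects $\{A_M([\sigma_i]) \to A_M([\psi])\}$ is jointly epimorphic in $\mathcal{E}$. The converse, that $M_A$ satisfies the axioms of $T$, will run in the opposite direction: continuity of $A$ forces the joint-epi condition on the corresponding subobjects, which is exactly what it means for the axiom to hold in the internal language of $\mathcal{E}$.

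The hard part will be the last step: checking that the two constructions are mutually quasi-inverse, and that this extends to a $2$-equivalence (so that natural transformations between functors correspond to homomorphisms of $T$-models). The object-level inverseness is essentially tautological once one observes that a subsheaf representing a formula is determined by its generic element and that $A$ is recovered from $M_A$ by evaluating on the universal formulas $[x = x, X]$; the delicate point is carefully tracking coherence: one must check that the identity arrows, composition (via $\exists Y(\sigma \wedge \tau)$), and the product/equalizer structure in $C(T)$ are all respected under both assignments, and that natural transformations $A \Rightarrow A'$ correspond bijectively to $T$-model homomorphisms $M_A \to M_{A'}$. Modulo this bookkeeping, combining the equivalence $A \leftrightarrow M_A$ with Theorem \ref{corres} gives the desired $\mathrm{Hom}(\mathcal{E}, Sh(C(T), J(T))) \cong Mod(\mathcal{E}, T)$.
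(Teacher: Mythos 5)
The paper does not actually prove this statement: it is quoted as a theorem of Joyal and Reyes and used as a black box, so there is no in-paper argument to compare against. Your outline follows the standard route, and it is the right one given the tools the paper sets up: reduce via Theorem \ref{corres} to continuous left-exact functors $C(T)\to\mathcal{E}$ (after observing that $C(T)$ has finite limits, built from conjunction and equality), and then match such functors with $T$-models. The constructions $M\mapsto A_M$ and $A\mapsto M_A$ are the correct ones, and your reading of the three clauses defining an arrow $[\sigma]$ as ``containment, totality, single-valuedness of a functional relation'' is exactly right.

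Two caveats on what you defer as ``bookkeeping,'' since that is where most of the mathematics lives. First, the claim that ``$M$ satisfies every theorem of $T$'' is the soundness theorem for geometric logic interpreted in a topos; it needs an induction over derivations and is not free. Second, showing the two constructions are quasi-inverse requires proving that for a continuous left-exact $A$ one has $A([\phi,X])\cong\{X\mid\phi\}^{M_A}$ for \emph{every} geometric formula $\phi$, by induction on the structure of $\phi$: left-exactness handles atomic formulas and $\wedge$, while cover-preservation is what handles $\vee$ and $\exists$ (the image of a cover must be jointly epimorphic, which is precisely how the interpretations of disjunction and existential quantification are computed). There is also a small mismatch to reconcile with this paper's particular definition of $C(T)$, whose objects are identified by having the same interpretation in all models rather than by provable equivalence; your constructions are insensitive to this, but it is worth noting that the well-definedness of $A_M$ on equivalence classes is immediate only under the semantic definition. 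None of these points breaks your argument; they just need to be carried out rather than waved at.
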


Thus, to any geometric morphism in $Sh(C(T), J(T))$ it is possible to associate a model of $T$. One of these morphisms is especially interesting: the identity.
Call $U(T)$ the model corresponding to the identity. Then, any model is, up to isomorphism, the inverse image of $U(T)$ by some geometric morphism. 

\subsection{Deligne/G\"odel}

Using what has just been done, it is finally possible to illustrate the correspondence between Deligne's and G\"odel's theorem.
Indeed, look at the following sequence of equivalences:
\small
$
\begin{array}{lclr}
		(\phi \vdash \psi$ is a theorem$)  & \Leftrightarrow &   (\phi \vdash \psi$ is satisfied in the model $U_T$ of $Sh(C(T), J(T))) & (1)\\
	             & \Leftrightarrow & (\phi \vdash \psi$ is satisfied in all models of $T$ in any topos$) &(2)\\
							 & \Leftrightarrow &  (\phi \vdash \psi$ is satisfied in all models of $T$ in $Sets) &(3)\newline
\end{array}$
\normalsize

$(2)$ is a consequence of Yoneda's embedding and of the canonicity of $J(T)$ \cite{MM} X.7. . While this is not trivial to prove, it will not be done here as the focus is on the role of
of Deligne's theorem in the proof of completeness.

Assuming that $(1)$ and $(2)$ hold, our goal is to show that $(\phi \vdash \psi$ is satisfied in all models of $T$ in $Sets$) $\Rightarrow
(\phi \vdash \psi$ is satisfied in $U_T$).

Let $p : Set \to Sh(C(T), J(T))$ be a point of the classifying topos associated to $T$. Then $M = p^\ast(U_T)$ is a model of $T$ and the sequent $\phi \vdash \psi$ 
is satisfied in $M$. According to Definition \ref{satisfied} this means that:
		\[M(\bigwedge_{\phi \in \Gamma} \phi, V) \leq M(\bigvee_{\psi \in \Delta} \psi, V) \]
or, equivalently,

		\[p^\ast \circ U_T(\bigwedge_{\phi \in \Gamma} \phi, V) \leq p^\ast \circ U_T(\bigvee_{\psi \in \Delta} \psi, V) \]

This is true for any point $p$ of $Sh(C(T), J(T))$ which is a coherent topos. According to Deligne's theorem, this means that:
		\[ U_T(\bigwedge_{\phi \in \Gamma} \phi, V) \leq U_T(\bigvee_{\psi \in \Delta} \psi, V) \],
and concludes the proof.

\subsection*{Acknowledgements}
I would like to thank Alain Prout\'e who first suggested this talk. He was also the one who 
brought \cite{costes} to my attention, article that proved really helpful. His advice and availability were very precious to me both
during the preparation of this talk and afterwards.   

\appendix
\section{Interpretations and Models}\label{modeles}

We will deal here with the general case and let the reader define the corresponding the notions in the special case of $Sets$.
More can be found in \cite{Reyes}.

Set $L$, a geometric language, and $\mathcal{E}$, a topos.

\begin{definition} ($\mathcal{E}-$interpretation of a geometric language)\newline

	An \textit{interpretation} $M$ of $L$ in $\mathcal{E}$ is a function which assigns
	\begin{enumerate}
		\item to each sort $X$ of $L$, an object $M(X) := s^M$ of $\mathcal{E}$;
		\item to each function symbol $f : X_1 \times X_2 \times \dots \times X_n \to Y$, a morphism $M(f) := f^M : X_1^M \times \dots \times X_n^M \to Y^M$;
		\item to each relation symbol $R \subseteq X_1 \times \dots \times X_n$, a subset $M(R) := R^M$ of $X_1^M \times \dots \times X_n^M$;
		\item to each constant $c$ of sort $X$ an element $M(c) := c^M$ of $X^M$.
	\end{enumerate}
\end{definition}

	To a term $t(x_1, \dots, x_n)$ of sort $Y$ whose variables are among the $x_i$ of sort $X_i$ we associate an arrow:
			\begin{center}
				$M(t) := t^M : X_1^M \times \dots \times X_n^M \to Y^M$.
			\end{center}

	To a formula $\phi(x_1, \dots, x_n)$ we associate a subobject of $X_1^M \times \dots \times X_n^M$: $M(\phi) := \{(x_1, \dots x_n) | \phi\}^M$;.

	Atomic formulas $t(x_1, \dots, t_n) = t'(x_1, \dots, x_n)$ are interpreted with the help of the equalizer
	\begin{center}
		$M(t = t') := \{(x_1, \dots, x_n) | t = t' \}^M \rightarrowtail X_1^M \times \dots \times X_n^M \rightrightarrows Y^M$.		
	\end{center}

  $\wedge$ and $\vee$ are interpreted, unsurprisingly, by $\{(x_1, \dots, x_n) | \phi \wedge \psi\}^M = \{(x_1, \dots, x_n\ | \phi\}^M \wedge \{(x_1, \dots, x_n\ | \psi\}^M$ 
	and  $\{(x_1, \dots, x_n) | \phi \vee \psi\}^M = \{(x_1, \dots, x_n\ | \phi\}^M \vee \{(x_1, \dots, x_n\ | \psi\}^M$ respectively. 

	Finally, here is the interpretation of $\exists$. Set $\pi: X_1^M \times \dots \times X_n^M \times X^M \to X_1^M \times \dots \times X_n^M$. As a morphism of $\mathcal{E}$,
	$\pi$ has a left-adjoint $\exists_\pi : Sub(X_1^M \times \dots \times X_n^M \times X^M) \to Sub(X_1^M \times \dots \times X_n^M)$. Then, define
	\begin{center}
		$\{(x_1, \dots, x_n) | \exists x \in X \phi(x_1, \dots, x_n, x)\}^M = \exists_\pi \{(x_1, \dots, x_n, x) | \phi(x_1, \dots, x_n, x) \}^M$.
	\end{center}

\begin{definition}(Model)\newline

	Let $L$ be a geometric language and let $T$ be a theory expressed in $L$. Let $M$ be an interpretation of $T$ in $\mathcal{E}$.
	$M$ is a \textit{model} of $T$ in $\mathcal{E}$ if for any axiom $\Gamma \vdash \Delta^V$ of $T$, we have:
	\begin{center}
		\[M(\bigwedge_{\phi \in \Gamma} \phi, V) \leq M(\bigvee_{\psi \in \Delta} \psi, V) \].
	\end{center}
\end{definition}

Thus, it is possible to define the subcategory (of the category of interpretations of $L$) $Mod(\mathcal{E}, T)$ of models of $T$ in $\mathcal{E}$ in the following way:
\begin{enumerate}
	\item Objects of $Mod(\mathcal{E}, T)$ are the models of $T$ in $\mathcal(E)$;
	\item Arrows of $Mod(\mathcal{E}, T)$ are homomorphisms $H : M \to M'$ such that for each sort of $X$, arrows $H_X: X^M \to X^{M'}$ make the diagrams:
		\begin{center}
			\begin{displaymath}
				\xymatrix{
				R^M \ar@{^{(}->}[rr]_{} \ar[d] && X_1^M \times \dots \times X_n^M \ar[d]_{H_{X_1 \times \dots \times X_n}}\\
				R^{M'} \ar@{^{(}->}[rr]_{} && X_1^{M'} \times \dots \times X_n^{M'} }
			\end{displaymath}
			\begin{displaymath}
				\xymatrix{
				X_1^M \times \dots \times X_n^M \ar[d]_{H_{X_1 \times \dots \times X_n}} \ar[rr]^{f^M} && Y^M \ar[d]_{H_Y}\\
				X_1^{M'} \times \dots \times X_n^{M'} \ar[rr]_{f^{M'}} &&  Y^{M'}}
			\end{displaymath}
			\begin{displaymath}
				\xymatrix{
				1  \ar@{=}[d]  \ar[rr]^{c^M} && X^M \ar[d]_{H_X}\\
				1 \ar[rr]_{c^{M'}} &&  X^{M'}}
			\end{displaymath}

			commute,

\end{center}
		where $R$ a relation symbol, $f$ a function symbol and $c$ a constant.
\end{enumerate}

We conclude this appendix by a very important result which explains why so much attention is given to geometric theories.

\begin{proposition}
	Let $T$ be a geometric theory. Let $\mathcal{E}, \mathcal{F}$ be topoi and $f : \mathcal{F} \to \mathcal{E}$ be a geometric morphism.
	Then $f$ induces a functor $F^\ast : Mod(\mathcal{E}, T) \to Mod(\mathcal{F}, T)$.
\end{proposition}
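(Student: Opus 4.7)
The plan is to define $F^\ast$ on an object $M \in Mod(\mathcal{E},T)$ by transporting all of the data of the interpretation through $f^\ast : \mathcal{E} \to \mathcal{F}$, and on a morphism $H : M \to M'$ componentwise by $(F^\ast H)_X := f^\ast(H_X)$. Concretely, I would set $X^{F^\ast M} := f^\ast(X^M)$ for each sort, $f^{F^\ast M} := f^\ast(f^M)$ for each function symbol (using the canonical isomorphisms $f^\ast(A\times B)\cong f^\ast A\times f^\ast B$ coming from left exactness), $c^{F^\ast M}$ obtained from $f^\ast(c^M)$ via $f^\ast(1)\cong 1$, and $R^{F^\ast M}$ as $f^\ast$ applied to the mono $R^M \hookrightarrow X_1^M\times\cdots\times X_n^M$, which is still a monomorphism because $f^\ast$ preserves finite limits.

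The heart of the argument is then to prove, by induction on the structure of geometric formulas, the compatibility
\[ \{(x_1,\dots,x_n)\mid\phi\}^{F^\ast M} \;\cong\; f^\ast\bigl(\{(x_1,\dots,x_n)\mid\phi\}^M\bigr) \]
as subobjects of $f^\ast(X_1^M\times\cdots\times X_n^M)$. The base cases for atomic formulas, including equality, use that $f^\ast$ preserves equalizers (left exactness again). The inductive step for $\wedge$ reduces to $f^\ast$ commuting with binary intersections of subobjects, i.e.\ with pullbacks; for $\vee$, to $f^\ast$ commuting with unions of subobjects, which in a topos are built from coproducts and image factorizations, both preserved by $f^\ast$ (coproducts because $f^\ast$ is a left adjoint, images because it is left exact and preserves coequalizers).

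The delicate case is $\exists$. Existential quantification along a projection $\pi$ is the left adjoint $\exists_\pi$ to pullback $\pi^{-1}$, and what is needed is a Beck--Chevalley identity $f^\ast\circ\exists_\pi\cong\exists_{f^\ast\pi}\circ f^\ast$ on subobject lattices. This follows from the fact that $\exists_\pi$ is computed as the image of post-composition with $\pi$, and $f^\ast$ preserves images. I expect this to be the main obstacle in the sense that every other piece is routine, whereas the commutation with $\exists$ is exactly the point where the geometric hypothesis bites: had the theory used $\forall$ or $\Rightarrow$, we would need $f^\ast$ to be a right adjoint, which in general it is not.

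With the inductive compatibility in hand, verifying that $F^\ast M$ satisfies every axiom $\Gamma\vdash\Delta^V$ of $T$ is immediate: apply $f^\ast$ to the subobject inclusion $M(\bigwedge\Gamma,V)\leq M(\bigvee\Delta,V)$ in $\mathcal{E}$ and use the compatibility to rewrite the two sides as the interpretations in $F^\ast M$. Functoriality on morphisms $H:M\to M'$ is then a matter of applying $f^\ast$ to the three defining squares of a homomorphism of models, which yields the corresponding squares for $F^\ast M$ and $F^\ast M'$; since $f^\ast$ itself is a functor, identities and compositions are preserved automatically, and $F^\ast$ is indeed a functor $Mod(\mathcal{E},T)\to Mod(\mathcal{F},T)$.
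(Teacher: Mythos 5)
Your proof is correct, and it is the standard argument; note that the paper itself states this proposition without any proof at all, so there is nothing to compare against on that side. You correctly identify every ingredient: left exactness of $f^\ast$ handles products, equalizers, monomorphisms and binary meets; preservation of colimits handles coproducts, epimorphisms and hence image factorizations, which gives both $\vee$ (unions as images of coproducts) and $\exists$ (since $\exists_\pi$ of a subobject is the image of its composite with $\pi$). Your closing observation --- that the argument breaks for $\forall$ and $\Rightarrow$ precisely because $f^\ast$ is not a right adjoint --- is exactly the point the paper is making by placing this proposition at the end of the appendix as the justification for restricting to geometric theories.
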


 \bibliographystyle{alpha}
\bibliography{bibliography}

\end{document}